\newtheorem{lemma}{Lemma}
\newtheorem{theorem}{Theorem}
\newtheorem{corollary}{Corollary}
\newcommand {\p} {\mathbb{P}}
\newcommand {\N} {\mathbb{N}}
\newcommand {\fu} {\mathfrak{u}}
\newcommand {\afu} {{|\mathfrak{u}|}}
\newcommand {\ve} {\varepsilon}
\def\blfootnote{\xdef\@thefnmark{}\@footnotetext}\makeatother
\title{\bf Tractability results for the weighted star-discrepancy}
\author{Christoph Aistleitner} 
\address{Department of Applied Mathematics, School of Mathematics and Statistics, University of New South Wales, Sydney NSW 2052, Australia}
\email{aistleitner@math.tugraz.at}
\thanks{The author is supported by a Schr\"odinger scholarship of the Austrian Research
Foundation (FWF)}
\subjclass[2010]{65C05, 11K38}
\begin{document}

\begin{abstract}
The weighted star-discrepancy has been introduced by Sloan and Wo{\'z}niakowski to reflect the fact that in multidimensional integration problems some coordinates of a function may be more important than others. It provides upper bounds for the error of multidimensional numerical integration algorithms for functions belonging to weighted function spaces of Sobolev type. In the present paper, we prove several tractability results for the weighted star-discrepancy. In particular, we obtain rather sharp sufficient conditions under which the weighted star-discrepancy is strongly tractable. The proofs are probabilistic, and use empirical process theory.
\end{abstract}

\date{}
\maketitle

\section{Introduction}

For a set of points $x_1, \dots, x_N$ from the $d$-dimensional unit cube $[0,1]^d$, for any $z \in [0,1]^d$ the \emph{discrepancy function} $\Delta(z)$ is defined as
$$
\Delta(z) = \frac{1}{N} \sum_{n=1}^N \mathds{1}_{[0,z)} (x_n) - \lambda([0,z)),
$$
and the star-discrepancy $D_N^*(x_1, \dots, x_N)$ is defined as 
$$
D_N^*(x_1, \dots, x_N) = \sup_{z \in [0,1]^d} \left| \Delta(z) \right|.
$$
Here $[0,z)$ is an axis-parallel box that stretches from the origin to $z$, and $\lambda$ denotes the ($d$-dimensional) Lebesgue measure. The Koksma--Hlawka inequality states that for a function $f$ on $[0,1]^d$ the difference between the arithmetic mean of the function values $f(x_1), \dots, f(x_N)$ and the integral of $f$ over $[0,1]^d$ is bounded by the star-discrepancy of $x_1, \dots, x_N$, multiplied with the (Hardy--Krause) variation of $f$ over $[0,1]^d$. Consequently, point sets having small star-discrepancy can be used to approximate a multidimensional integral. This method for numerical integration is an example of the so-called \emph{quasi-Monte Carlo (QMC) method} , which uses cleverly constructed deterministic point sets as sampling points (as opposed to the Monte Carlo method, where randomly sampled points are used).\\

There exist several constructions of point sets achieving a discrepancy of order $\ll (\log N)^{d-1} N^{-1}$, for fixed $d$ and for $N \to \infty$. However, these bounds are only useful if the number of points $N$ is very large (i.e., at least exponential) in comparison with $d$, which means that QMC integration using such points is not feasible on a computer if $d$ is large. To describe the problem concerning the existence of low-discrepancy point sets of moderate cardinality for large values of $d$, the notion of the \emph{inverse of the star-discrepancy} can be used. Let $n^*(d,\ve)$ denote the smallest possible cardinality of a point set in $[0,1]^d$ having discrepancy at most $\ve$. By a result of Heinrich \emph{et al.}~\cite{hnww} for any $d$ and $N$ there exist points $x_1, \dots, x_N \in [0,1]^d$ such that
\begin{equation} \label{hnww}
D_N^*(x_1, \dots, x_N) \leq c_{\textrm{abs}} \frac{\sqrt{d}}{\sqrt{N}}
\end{equation}
(where we can choose $c_{\textup{abs}}=10$, see~\cite{aist}), which implies that
$$
n^*(d,\ve) \leq c_{\textup{abs}} d \ve^{-2}
$$
($c_{\textup{abs}}$ denotes positive absolute constants, not always the same). On the other hand, Hinrichs~\cite{hinr} proved the lower bound 
$$
n^*(d,\ve) \geq c_{\textup{abs}} d \ve^{-1}.
$$
Thus there exist high-dimensional low-discrepancy point sets which have moderate cardinality in comparison with the dimension $d$. Note, however, that constructing such point sets is a largely unsolved problem (cf.~\cite{dgw,dgw2}), and that calculating (or estimating) the discrepancy of a given high-dimensional point set is generally a very difficult problem (see~\cite{gsw}).\\

A series of numerical investigations of Paskov and Traub in the mid-1990s showed that in practice QMC integration can still be successfully applied to high-dimensional problems, and often perform significantly better than what could be expected from theoretical upper bounds (see \cite{past}). One possible explanation is that often for a formally high-dimensional problem only a small number of coordinates is really important, while other (or most) coordinates are much less important. This idea led to the introduction of weighted function spaces and weighted discrepancies by Sloan and Wo{\'z}niakowski~\cite{sw}. These concepts are closely connected with the theory of (weighted) reproducing kernel Hilbert spaces of Sobolev type; in particular, the error of a QMC integration scheme for a function $f$ from such a weighted space can be estimated in terms of the norm of $f$ in this space and the corresponding weighted discrepancy of the set of sampling points, by means of a weighted Koksma--Hlawka inequality. For details, see~\cite{sw} as well as~\cite{dksh,dpd}.\\

By the expression \emph{weights} we mean a set $\gamma$ of non-negative real numbers $\gamma_\fu$, indexed by the class of all non-empty subsets $\fu$ of the set of coordinates $\{1, \dots, d\}$ (or indexed by the class of all non-empty subsets of $\N$). An important special case are \emph{product weights}, which satisfy
$$
\gamma_\fu = \prod_{j \in \fu} \gamma_j,
$$
where $\gamma_j$ is the weight of $\{j\}$, that is, the weight associated with the $j$-th coordinate.\\

Let $\afu$ denote the cardinality of $\fu$. For a point $x \in [0,1]^d$ and a non-empty subset $\fu$ of $\{1, \dots, d\}$, we write $x(\fu)$ for the $|\fu|$-dimensional point which consists only of those coordinates of $x$ whose index belongs to $\fu$. Furthermore, we write $(x(\fu);1)$ for the $d$-dimensional vector which has the same coordinates as $x$, except that coordinates whose index is not in $\fu$ are replaced by 1. Then the \emph{weighted star-discrepancy} of the points $x_1, \dots, x_N \in [0,1]^d$ for weights $\gamma = (\gamma_\fu)_{\fu \subset \{1, \dots, d\}}$ is defined as
$$
D_{N,\gamma}^*(x_1, \dots, x_N) = \sup_{z \in [0,1]^d}~ \max_{\fu \subset \{1, \dots, d\}} ~\gamma_\fu |\Delta(z(\fu);1)|.
$$
For simplicity of writing, in this definition and throughout the rest of this paper we assume that $\fu \subset A$ denotes only the \emph{non-empty} subsets of a given set $A$. \\

We also need the notions of tractability and strong tractability. Let $n_\gamma^*(d,\ve)$ denote the smallest possible cardinality of a set of points from $[0,1]^d$ whose weighted star-discrepancy (with respect to the weights $\gamma$) is at most $\ve$. Then the weighted star-discrepancy is called \emph{tractable} if there exist non-negative constants $C,\alpha,\beta$ such that
\begin{equation} \label{err}
n_\gamma^*(d,\ve) \leq C d^{\alpha} \ve^{-\beta}
\end{equation}
for all $d \geq 1$ and $\ve \in (0,1)$. Furthermore, it is called \emph{strongly tractable} if~\eqref{err} holds for $\alpha=0$. In the case of strong tractability, the infimum of $\beta$ in~\eqref{err}, for $\alpha=0$, is called the $\ve$-\emph{exponent} of strong tractability. For more details on definitions and properties of the tractability of multidimensional problems, see the monographs of Novak and Wo{\'z}niakowski~\cite{nw1,nw2,nw3}.\\

Amongst others, the following tractability results for the weighted star-discrepancy are known: 
\begin{itemize}
\item (\cite[Theorem 1]{hpst}) There exists an (unknown) absolute constant $c_{\textup{abs}}$ such that for any weights $\gamma$, any $d \geq 1$ and any $N \geq 1$ there exist points $x_1, \dots, x_N \in [0,1]^d$ whose weighted star-discrepancy is bounded by
\begin{equation} \label{hps}
D_{N,\gamma}^*(x_1, \dots, x_N) \leq c_{\textup{abs}} \frac{1 + \sqrt{\log d}}{\sqrt{N}} \max_{\fu \subset \{1, \dots, d\}} \gamma_\fu \sqrt{\afu}.
\end{equation}
\item (\cite[Theorem 3]{hpst}, based on~\cite[Corollary 8]{dnpw}) For product weights satisfying 
\begin{equation} \label{hps3}
\sum_{j=1}^\infty \gamma_j < \infty,
\end{equation}
the weighted star-discrepancy is strongly tractable with $\ve$-exponent equal to 1.
\item (\cite[Theorem 4]{hpst}) Let weights $\gamma_\fu$ be given which satisfy $\gamma_\fu \geq c$ for some constant $c$ whenever $\afu =  2$. Then for any points $x_1, \dots, x_N \in [0,1]^d$ we have
\begin{equation} \label{hps2}
D_{N,\gamma}^*(x_1, \dots, x_N) \geq \frac{c}{12},
\end{equation}
provided that the inequality $d \geq 2^{N+1}$ holds. Consequently, the logarithmic factor in~\eqref{hps} can in general not be removed.
\item (\cite[Main Theorem]{hswo}) For product weights, assume that
\begin{equation} \label{hswcond}
\sum_{j=1}^\infty \gamma_j^a <  \infty
\end{equation}
for some (arbitrary) constant $a$. Then for any $b>0$ there exists a constant $C(b)$ such that for any $d \geq 1$ and any $N \geq 1$ there are points $x_1, \dots, x_N \in [0,1]^d$ such that
\begin{equation*}
D_{N,\gamma}^*(x_1, \dots, x_N) \leq C(b) \frac{1}{N^{1/2-b}}.
\end{equation*}
\end{itemize}
It should be noted that (in)tractability results for the weighted star-discrepancy also imply similar (in)tractability results for multidimensional numerical integration in the corresponding function space; this connection is presented in detail in~\cite{hswo}.\\

The purpose of the present paper is the following. Firstly, using a recent large deviations bound for star-discrepancies due to Aistleitner and Hofer~\cite{aistho}, to establish a numerically explicit and slightly improved version of~\eqref{hps}. Secondly, to provide a sufficient condition for strong tractability for the weighted star-discrepancy for general weights $\gamma$ (not necessarily product weights). And finally, to show that in the case of product weights condition~\eqref{hswcond} is not optimal as a sufficient condition for strong tractability of the weighted star-discrepancy, and can be replaced by a much weaker condition.\\

It should be remarked that all results in this paper are non-constructive existence results, which use probabilistic arguments and empirical process theory in the same spirit as the proofs in~\cite{hnww}. However, there also exist several constructive results, for example due to Dick, Leobacher and Pillichshammer~\cite{dlpc}, Larcher, Pillichshammer and Scheicher~\cite{lpsw} and Wang~\cite{wang1,wang2}. Furthermore, the results in this paper should also be compared with tractability results for the weighted $L^p$-discrepancy, see for example Leobacher and Pillichshammer~\cite{lpb} and the references therein.

\section{Results}

The following result is a slightly improved and numerically explicit version of~\eqref{hps}. The subsequent corollary is similar to a corresponding corollary (\cite[Corollary 1]{hpst}) which follows from~\eqref{hps}. In the sequel, by $e$ we denote Euler's number.
\begin{theorem} \label{th1}
For any weights $\gamma=(\gamma_\fu)_{\fu \subset \{1, \dots, d\}}$, any $d \geq	1$ and any $N \geq 1$ there exist points $x_1,\dots, x_N \in [0,1]^d$ such that
$$
D_{N,\gamma}^*(x_1, \dots, x_N) \leq \frac{5.7}{\sqrt{N}}~ \max_{\fu \subset \{1, \dots,d\}} \gamma_{\fu} \sqrt{4.9 + 2 \log \left(\frac{ed}{|\mathfrak{u}|}\right)} \sqrt{|\mathfrak{u}|}.
$$
\end{theorem}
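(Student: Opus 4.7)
The proof should be probabilistic. Let $x_1,\ldots,x_N$ be i.i.d.\ uniform random points in $[0,1]^d$. For a fixed non-empty $\fu\subset\{1,\ldots,d\}$ with $|\fu|=k$, the projected points $x_1(\fu),\ldots,x_N(\fu)$ are i.i.d.\ uniform in $[0,1]^k$, and by construction
$$
\sup_{z\in[0,1]^d}|\Delta(z(\fu);1)| \;=\; D_N^*(x_1(\fu),\ldots,x_N(\fu)).
$$
So the problem reduces to controlling, simultaneously for every $\fu$, the ordinary star-discrepancy of a $k$-dimensional i.i.d.\ uniform sample, where $k=|\fu|$.

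The key tool is the Aistleitner--Hofer large-deviation inequality for star-discrepancies cited in the introduction. It provides an explicit tail bound of the rough form
$$
\mathbb{P}\bigl(\sqrt{N}\,D_N^*(X_1,\ldots,X_N) > c_1\sqrt{k}+t\bigr) \leq c_2 \exp(-c_3 t^2),
$$
for i.i.d.\ uniform samples in $[0,1]^k$, with numerically explicit constants. I would set the threshold
$$
t_k \;=\; \frac{5.7}{\sqrt{N}}\sqrt{4.9+2\log(ed/k)}\,\sqrt{k},
$$
and estimate the probability that, for a given $\fu$ of size $k$, the discrepancy of the projected sample exceeds $t_k$. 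Squaring gives a Gaussian-type decay whose exponent contains a term $2k\log(ed/k)$ multiplied by an absolute constant.

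A union bound over $\fu$ then completes the argument: there are $\binom{d}{k}\leq (ed/k)^k = \exp(k\log(ed/k))$ subsets of size $k$, and the $2\log(ed/k)$ inside the square root in $t_k$ is chosen precisely so that the tail bound absorbs this combinatorial factor, leaving a net decay $\exp(-\alpha k\log(ed/k))$ for some $\alpha>0$. Summing first over $k=1,\ldots,d$, the residual constant $4.9$ in $t_k$ supplies the extra slack needed to make the total failure probability strictly less than $1$ uniformly in $d$. Hence there exists a realization of $x_1,\ldots,x_N$ for which $D_N^*(x_1(\fu),\ldots,x_N(\fu))\leq t_k$ holds simultaneously for every $\fu$, and multiplying through by $\gamma_\fu$ yields the claimed inequality.

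The routine (and non-routine) part will be the numerical bookkeeping: plugging the explicit Aistleitner--Hofer constants into the tail bound, combining with the $\binom{d}{k}\leq(ed/k)^k$ estimate, and verifying that the specific choices $5.7$ and $4.9$ really deliver a total failure probability strictly below $1$ for every $d$ and $N$. The probabilistic skeleton is standard, but these constants have no slack to spare, so pinning them down precisely is the delicate step.
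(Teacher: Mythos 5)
Your proposal follows essentially the same route as the paper: i.i.d.\ uniform points, project onto each coordinate subset $\fu$, apply the Aistleitner--Hofer tail bound to each projection, and union-bound over subsets using $\binom{d}{k}\leq(ed/k)^k$. One clarification on the step you flag as delicate: no separate numerical bookkeeping is actually needed, because the cited lemma already comes in exactly the shape required. It says that with probability at least $q$ one has
$$
D_N^*\leq 5.7\sqrt{4.9+\tfrac{\log((1-q)^{-1})}{k}}\,\tfrac{\sqrt{k}}{\sqrt{N}},
$$
so choosing $q=1-e^{-2k\log(ed/k)}$ turns $\log((1-q)^{-1})/k$ into $2\log(ed/k)$ and immediately yields $\p(A_\fu)\leq e^{-2k\log(ed/k)}$; the constants $5.7$ and $4.9$ are inherited verbatim from the lemma, not tuned for this argument. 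Also, the slack that makes $\sum_r\binom{d}{r}\p(A_\fu)$ converge does not come from the $4.9$; it comes from the factor $2$ in $2\log(ed/k)$ (the tail bound pays $-2r\log(ed/r)$ against the union-bound cost $+r\log(ed/r)$), leaving $e^{-r\log(ed/r)}\leq e^{-r}$, whose sum is $(e-1)^{-1}<1$. With those two points understood, your argument is the paper's argument.
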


\begin{corollary} \label{co1}
For weights $(\gamma_\fu)_{\fu \subset \N}$, if the number
$$
C_\gamma := \sup_{d \geq 1}~ \max_{\fu \subset \{1, \dots, d\}}~ \gamma_\fu \sqrt{\afu}
$$
is finite, then for the weighted star-discrepancy of the point set from Theorem~\ref{th1} we have
$$
D_{N,\gamma}^*(x_1, \dots, x_N) \leq \frac{6 C_\gamma}{\sqrt{N}} \sqrt{5 + 2 \log (ed)},
$$
which means that for $n_\gamma^*(d,\ve)$ we have the upper bound
$$
n_\gamma^*(d,\ve) \leq \frac{36 C_\gamma^2 (5 + 2 \log(ed))}{\ve^2}.
$$
Consequently, in this case the weighted star-discrepancy is tractable.\\

Furthermore, if even
\begin{equation} \label{co1equ}
\hat{C}_\gamma := \sup_{d \geq 1}~ \max_{\fu \subset \{1, \dots, d\}}~ \gamma_\fu \sqrt{4.9 + 2 \log \left(\frac{ed}{|\mathfrak{u}|}\right)} \sqrt{\afu}
\end{equation}
is finite, then for the weighted star-discrepancy of the point set from Theorem~\ref{th1} we have
$$
D_{N,\gamma}^*(x_1, \dots, x_N) \leq \frac{6 \hat{C}_\gamma}{\sqrt{N}},
$$
and 
$$
n_{\gamma}^*(d,\ve) \leq \frac{36 \hat{C}_\gamma^2}{\ve^2}.
$$
Consequently, in this case the weighted star-discrepancy is strongly tractable with $\ve$-exponent at most $2$.\\
\end{corollary}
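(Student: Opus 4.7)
The corollary is essentially a direct consequence of Theorem~\ref{th1}; the plan is simply to bound the $\max_\fu$ appearing there in two different ways, depending on which of $C_\gamma$ or $\hat{C}_\gamma$ is assumed to be finite.

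For the first part, I would start from the bound of Theorem~\ref{th1} and use the trivial inequality $|\fu| \geq 1$ to deduce $\log(ed/|\fu|) \leq \log(ed)$, so that the factor $\sqrt{4.9 + 2 \log(ed/|\fu|)}$ can be replaced by $\sqrt{4.9 + 2 \log(ed)}$ and pulled out of the maximum. Since $4.9 < 5$, this is at most $\sqrt{5 + 2 \log(ed)}$. The remaining maximum is $\max_\fu \gamma_\fu \sqrt{|\fu|}$, which is bounded by $C_\gamma$ by assumption. Rounding the numerical constant $5.7$ up to $6$ then produces the claimed bound on $D_{N,\gamma}^*$. Solving this inequality for $N$ as a function of $\ve$ yields the stated upper bound on $n_\gamma^*(d,\ve)$, and tractability is immediate because $\log(ed) = O(d)$, so~\eqref{err} holds with, say, $\alpha = 1$ and $\beta = 2$.

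For the second part, the plan is even simpler: under the stronger hypothesis that $\hat{C}_\gamma$ is finite, I would bound the entire expression $\gamma_\fu \sqrt{4.9 + 2 \log(ed/|\fu|)}\sqrt{|\fu|}$ appearing in Theorem~\ref{th1} directly by $\hat{C}_\gamma$, again replacing $5.7$ by $6$. This gives $D_{N,\gamma}^*(x_1,\dots,x_N) \leq 6\hat{C}_\gamma/\sqrt{N}$, from which the bound on $n_\gamma^*(d,\ve)$ follows by solving for $N$. Since the resulting bound is independent of $d$, strong tractability with $\ve$-exponent at most $2$ is immediate from the definition.

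There is no real conceptual obstacle here; the whole argument is a short numerical bookkeeping step from Theorem~\ref{th1} to the two reformulations. The only points requiring a moment's care are the monotonicity observation $\log(ed/|\fu|) \leq \log(ed)$ in the first part, and the fact that in that first part $\log(ed)$ grows only logarithmically in $d$, which is what converts the bound on $n_\gamma^*(d,\ve)$ into tractability in the formal sense of~\eqref{err}.
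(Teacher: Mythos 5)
Your proposal is correct, and it is exactly the argument the paper has in mind (the paper gives no separate proof of Corollary~\ref{co1}, treating it as an immediate consequence of Theorem~\ref{th1}). The two steps — bounding $\sqrt{4.9+2\log(ed/|\fu|)}$ by $\sqrt{5+2\log(ed)}$ via $|\fu|\geq 1$ and then absorbing what remains into $C_\gamma$, versus absorbing the entire factor into $\hat{C}_\gamma$ — are the right reductions, and rounding $5.7$ to $6$ handles the constants. One small remark: $\log(ed)=1+\log d$ is in fact $O(\log d)$, not merely $O(d)$; either observation suffices for \eqref{err} with $\alpha=1$, $\beta=2$, but the sharper statement makes the tractability claim more transparent.
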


Note that, contrary to~\eqref{hps}, Theorem~\ref{th1} allows us to regain~\eqref{hnww} for the case of the classical star-discrepancy (with a slightly worse value for the numerical constant). In fact, the classical star-discrepancy is a special case of the weighted star-discrepancy where the only non-vanishing weight is $\gamma_{\{1, \dots,d\}}=1$, and all other weights are zero. Inserting these weights into Theorem~\ref{th1} we get a point set $x_1, \dots, x_N \in [0,1]^d$ satisfying
$$
D_N^*(x_1, \dots, x_N) \leq \frac{5.7}{\sqrt{N}} \sqrt{6.9 ~d} \leq 15 \frac{\sqrt{d}}{\sqrt{N}}.
$$
Thus any significant improvement of Theorem~\ref{th1} would include an improvement of~\eqref{hnww}, which appears to be a very difficult problem.

\begin{theorem} \label{th4}
Assume that the weights $\gamma = (\gamma_\fu)_{\fu \subset \N}$ satisfy the conditions $\gamma_\fu \leq c \afu^{-1/2},~\fu \subset \N,$ and 
\begin{eqnarray} \label{c1th4}
\sum_{\fu \subset \N} e^{-c \gamma_\fu^{-2}} < \infty
\end{eqnarray}
for some constant $c>0$. Then there exists a constant $C_\gamma$ such that for any $N \geq 1$ and $d \geq 1$ there exist points $x_1, \dots, x_N \in [0,1]^d$ for which we have
$$
D_{N,\gamma}^* (x_1, \dots, x_N) \leq \frac{C_\gamma}{\sqrt{N}}.
$$
Consequently, the weighted star-discrepancy for such weights is strongly tractable with $\ve$-exponent at most 2.
\end{theorem}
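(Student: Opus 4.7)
The argument will follow the probabilistic template of Heinrich \emph{et al.}~\cite{hnww} and of Theorem~\ref{th1}. I take $x_1, \dots, x_N$ to be independent samples distributed uniformly in $[0,1]^d$ and show that with positive probability
\[
\gamma_\fu \sup_{z \in [0,1]^d} |\Delta(z(\fu);1)| \leq \frac{C_\gamma}{\sqrt{N}}
\]
holds simultaneously for every non-empty $\fu \subset \{1, \dots, d\}$, provided $C_\gamma$ is chosen sufficiently large in terms of the constant $c$ appearing in the hypotheses and in condition~\eqref{c1th4}. The theorem then follows.

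For a fixed $\fu$ with $\afu = s$, the projected points $x_1(\fu), \dots, x_N(\fu)$ are iid uniform in $[0,1]^s$, and $\sup_z |\Delta(z(\fu);1)|$ is precisely their $s$-dimensional star-discrepancy. The large-deviation inequality of Aistleitner and Hofer~\cite{aistho}, which is the workhorse behind Theorem~\ref{th1}, supplies absolute constants $a,b>0$ such that for iid uniform $y_1, \dots, y_N$ in $[0,1]^s$ and every $t>0$,
\[
\p\bigl[D_N^*(y_1, \dots, y_N) > t\bigr] \leq e^{as - bNt^2}.
\]
I apply this to each $\fu$ with $t := C_\gamma/(\gamma_\fu \sqrt{N})$. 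The hypothesis $\gamma_\fu \leq c\,\afu^{-1/2}$ yields $Nt^2 = C_\gamma^2/\gamma_\fu^2 \geq C_\gamma^2 s/c^2$, so for $C_\gamma$ large enough (depending only on $a,b,c$) the exponent is bounded above by $-b'C_\gamma^2/\gamma_\fu^2$ for some $b'>0$.

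A union bound over all non-empty $\fu \subset \{1, \dots, d\}$, viewed as subsets of $\N$, then gives
\[
\p\Bigl[\max_\fu \gamma_\fu \sup_z |\Delta(z(\fu);1)| > C_\gamma/\sqrt{N}\Bigr] \leq \sum_{\fu \subset \N} e^{-b'C_\gamma^2/\gamma_\fu^2},
\]
and condition~\eqref{c1th4}, combined with a further enlargement of $C_\gamma$, forces the right-hand side to be strictly less than $1$. The existence of the desired point set follows, yielding strong tractability with $\ve$-exponent at most $2$.

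The delicate step will be constant-chasing: the combinatorial prefactor $e^{as}$ (and any polynomial prefactor of the form $N^{\text{const}\cdot s}$ or $(1/t)^{\text{const}\cdot s}$ that may actually appear in the Aistleitner--Hofer bound) must be absorbed by the leading term $e^{-bNt^2}$, and this is exactly what the restriction $\gamma_\fu \leq c\,\afu^{-1/2}$ is designed to permit. In contrast to the proof of Theorem~\ref{th1}, where one first union-bounds over the $\binom{d}{s}$ subsets of a given size $s$ and thereby picks up the $\log(ed/\afu)$ factor, here the union is taken directly over individual $\fu$ and the combinatorial explosion is controlled only via~\eqref{c1th4}. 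Matching the constant $c$ in~\eqref{c1th4} to the constants supplied by the Aistleitner--Hofer bound, and checking that any logarithmic or polynomial prefactors are harmless once $\gamma_\fu \leq c\,\afu^{-1/2}$ is used, is the main bookkeeping task.
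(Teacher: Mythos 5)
Your proof plan is correct, but it rests on a different key lemma than the paper's own proof. The paper proves Theorem~\ref{th4} using Lemma~\ref{lemma2}, the tail estimate of Heinrich \emph{et al.}\ (via Talagrand and Haussler),
\[
\p\left(D_N^* \geq \tfrac{t}{\sqrt{N}}\right)\leq \tfrac{1}{t}\left(\tfrac{Kt^2}{d}\right)^d e^{-2t^2},
\]
whose polynomial-in-$t$ prefactor $\left(Kt^2/d\right)^d$ forces an extra algebraic step (the inequality $\log(Kx)\leq x$ for $x\geq K$, together with the normalization $\hat c \geq c^2 K$) to absorb that factor into the exponential. You instead recast Lemma~\ref{lemma1} (Aistleitner--Hofer) as a tail bound: solving $t=5.7\sqrt{4.9 + \log((1-q)^{-1})/s}\,\sqrt{s/N}$ for $1-q$ gives exactly $\p\bigl[D_N^*>t\bigr]\leq e^{4.9s - Nt^2/5.7^2}$, i.e.\ your $e^{as-bNt^2}$ with $a=4.9$, $b=1/5.7^2$ (trivially extended to small $t$ since the right side exceeds $1$ there). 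Because this prefactor depends only on the dimension $s$ and not on $t$, the absorption step is immediate once $\gamma_\fu\leq c\afu^{-1/2}$ is used, so your route is actually slightly shorter than the paper's. The remaining ``constant-chasing'' you flag is exactly the paper's normalization: one needs $\gamma_\fu$ bounded above (which follows from $\gamma_\fu\leq c\afu^{-1/2}\leq c$) so that a further enlargement of $C_\gamma$ strictly decreases every term $e^{-b'C_\gamma^2\gamma_\fu^{-2}}$ by a factor bounded away from $1$, driving the convergent sum below $1$. With that filled in, the argument is complete and yields the same conclusion.
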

It turns out that Theorem~\ref{th4} actually contains the second part of Corollary~\ref{co1} (without the values for the numerical constants) as a special case;  a proof of this fact will be given at the end of this paper. \\

For the case of product weights, the convergence condition of Theorem~\ref{th4} can be significantly simplified. More precisely, we will obtain the following result.

\begin{theorem} \label{th2}
For product weights satisfying the condition
\begin{eqnarray} \label{c1}
\sum_{j=1}^\infty e^{-c \gamma_j^{-2}} < \infty
\end{eqnarray}
for some $c>0$ there is a constant $C_\gamma$ such that for any $N \geq 1$ and $d \geq 1$ there exist points $x_1, \dots, x_N \in [0,1]^d$ for which we have
$$
D_{N,\gamma}^* (x_1, \dots, x_N) \leq \frac{C_\gamma}{\sqrt{N}}.
$$
Consequently, the weighted star-discrepancy for such weights is strongly tractable, with $\ve$-exponent at most 2.
\end{theorem}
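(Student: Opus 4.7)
The strategy is to deduce Theorem~\ref{th2} directly from Theorem~\ref{th4} by verifying that the product-weight assumption~\eqref{c1} implies both of its hypotheses (possibly with a different constant). Let $c_0$ denote the constant in~\eqref{c1}. The bound $\sum_j e^{-c_0 \gamma_j^{-2}} < \infty$ forces $\gamma_j \to 0$, so the set $J := \{j : \gamma_j > 1/\sqrt{2}\}$ is finite and $M := \sup_j \gamma_j < \infty$. For each finite non-empty $\fu \subset \N$, I split $\fu = \fu_1 \cup \fu_2$ with $\fu_1 := \fu \cap J$ and $\fu_2 := \fu \setminus J$.

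The first hypothesis $\gamma_\fu \leq c\,\afu^{-1/2}$ is immediate from $\gamma_\fu \leq M^{|J|}\, 2^{-|\fu_2|/2}$: the function $k \mapsto 2^{-k/2}(|J|+k)^{1/2}$ is bounded on $\{0,1,2,\ldots\}$, so $\gamma_\fu\,\afu^{1/2} \leq C_1$ for some constant $C_1 = C_1(M,|J|)$.

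The main step is verifying $\sum_\fu e^{-c \gamma_\fu^{-2}} < \infty$. The key ingredient is the elementary product-versus-sum inequality: if $a_1,\ldots,a_k \geq 2$, then
\begin{equation*}
\prod_{i=1}^k a_i \ \geq\ 2^{k-1}\,\max_i a_i \ \geq\ k\,\max_i a_i \ \geq\ \sum_{i=1}^k a_i,
\end{equation*}
where I used $2^{k-1} \geq k$ for $k \geq 1$. Applied with $a_j = \gamma_j^{-2} \geq 2$ for $j \in J^c$, this gives $\gamma_{\fu_2}^{-2} \geq \sum_{j \in \fu_2}\gamma_j^{-2}$, hence $e^{-c \gamma_{\fu_2}^{-2}} \leq \prod_{j \in \fu_2} e^{-c \gamma_j^{-2}}$. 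Combining this with $\gamma_\fu^{-2} \geq \alpha\, \gamma_{\fu_2}^{-2}$ (where $\alpha := \min(1, M^{-2|J|}) > 0$) and summing over the $2^{|J|}$ choices of $\fu_1 \subseteq J$ yields
$$
\sum_{\fu \subset \N} e^{-c \gamma_\fu^{-2}} \ \leq\ 2^{|J|} \prod_{j \in J^c}\bigl(1 + e^{-c \alpha \gamma_j^{-2}}\bigr),
$$
which is finite provided $\sum_j e^{-c\alpha \gamma_j^{-2}} < \infty$. Choosing $c \geq \max(C_1,\, c_0/\alpha)$ makes both hypotheses of Theorem~\ref{th4} hold at once, and that theorem then yields Theorem~\ref{th2}.

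The main technical point is the product-versus-sum inequality, which is precisely what allows $e^{-c\gamma_\fu^{-2}}$ to factor into $\prod_{j \in \fu} e^{-c\gamma_j^{-2}}$, thereby reducing the subset condition to the singleton condition~\eqref{c1}. Treating the finitely many indices in $J$ where $\gamma_j > 1/\sqrt{2}$ is a routine perturbation absorbed into the constants.
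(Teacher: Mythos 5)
Your proposal is correct and follows essentially the same strategy as the paper: reduce Theorem~\ref{th2} to Theorem~\ref{th4} by showing that the product-weight assumption~\eqref{c1} implies both hypotheses of Theorem~\ref{th4}, the crux being a product-versus-sum inequality that makes $e^{-c\gamma_\fu^{-2}}$ factor over $j \in \fu$. The paper handles the finitely many ``large'' weights by a terse ``without loss of generality'' normalization ($\gamma_j \leq 1/2$ for all $j$), whereas you make this explicit via the split $\fu = \fu_1 \cup \fu_2$ with $\fu_1 \subset J := \{j : \gamma_j > 1/\sqrt{2}\}$; your version is a bit more careful but also a bit lengthier, and it obliges you to track the constant $\alpha$ that absorbs the contribution of $J$. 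The paper's key inequality is the generalized Bernoulli inequality $\prod_j(1+a_j) \geq 1 + \sum_j a_j$ applied to $a_j = \gamma_j^{-2}-1$, giving $\gamma_\fu^{-2} \geq \tfrac12\sum_{j\in\fu}\gamma_j^{-2}$ after the normalization $\gamma_j \leq 1/2$; your inequality (for $a_i \geq 2$, $\prod a_i \geq 2^{k-1}\max_i a_i \geq \sum_i a_i$) is a different elementary route to the same factorization and even avoids the extra factor of $1/2$. Finally, the paper sums over subsets of fixed cardinality $r$ and bounds by $(\sum_j e^{-\hat c\gamma_j^{-2}})^r$, while you write the sum directly as the infinite product $\prod_{j\in J^c}(1 + e^{-c\alpha\gamma_j^{-2}})$ --- equivalent bookkeeping. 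In short: same reduction, same idea, with cosmetically different but equally valid technical execution.
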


Note that condition~\eqref{c1} is much weaker as a sufficient condition for strong tractability than~\eqref{hswcond} (and of course also much weaker than~\eqref{hps3}, though at the expense of a worse value for the $\ve$-exponent). In fact, condition~\eqref{hswcond} resembles the conditions for strong tractability of the weighted $L^p$-discrepancy, which are typically of the form
$$
\sum_{j=1}^\infty \gamma_j^{p/2} < \infty
$$
(see~\cite{hswo2,lpb,nwi}). In~\cite{hswo}, Hickernell, Sloan, and Wasilkowski speculated that condition~\eqref{hswcond} might be necessary for strong tractability of the weighted star-discrepancy; as Theorem~\ref{th2} shows, this is not the case.\\

A typical sequence $(\gamma_j)_{j \geq 1}$ satisfying condition~\eqref{c1} of Theorem~\ref{th2} is $\gamma_j = \frac{\hat{c}}{\sqrt{\log j}}$ for some constant $\hat{c}>0$. The following Theorem \ref{th3} shows that for having strong tractability for the weighted star-discrepancy for product weights a growth condition for $\gamma_j$ of the order of a negative power of $\log j$ is necessary. More precisely, if $\gamma_j$ is not bounded above by a negative power of $\log j$, then the weighted star-discrepancy is not strongly tractable.

\begin{theorem} \label{th3}
For product weights $\gamma$, assume that the sequence $(\gamma_j)_{j \geq 1}$ is non-increasing. If for every $c>0$ and $\ve>0$ we have
\begin{equation} \label{th3c}
\gamma_j \geq \frac{c}{(\log j)^\ve}
\end{equation}
for infinitely many $j$, then the weighted star-discrepancy is \emph{not} strongly tractable.\\

Furthermore, for product weights $\gamma$ for non-increasing $(\gamma_j)_{j \geq 1}$, if there exist $c>0$ and $\ve>0$ such that we have
\begin{equation*} \label{th3c*}
\gamma_j \geq \frac{c}{(\log j)^{1/4-\ve}}
\end{equation*}
for infinitely many $j$, then the weighted star-discrepancy may be strongly tractable, but the $\ve$-exponent of tractability must be greater than 2.
\end{theorem}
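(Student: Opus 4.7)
The plan is to derive both parts from the lower bound \eqref{hps2} of Hinrichs--Pillichshammer--Schmid, which says that if all pair weights $\gamma_\fu$ with $\afu = 2$ are at least some $c>0$, then every $N$-point set in $[0,1]^d$ satisfies $D_{N,\gamma}^* \geq c/12$ whenever $d \geq 2^{N+1}$. The key observation is that for a non-increasing product-weight sequence, every pair weight $\gamma_i \gamma_{i'}$ with $i, i' \leq d$ is bounded below by $\gamma_d^2$, so \eqref{hps2} applies in dimension $d$ with $c$ replaced by $\gamma_d^2$. This is what translates a lower bound on individual $\gamma_j$ into a lower bound on $n_\gamma^*(d,\ve)$.

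Concretely, fix $\delta,c > 0$ and suppose $\gamma_j \geq c/(\log j)^\delta$ for infinitely many $j$; enumerate these indices as $j_1 < j_2 < \dots$ and set $d_k := j_k$. Every pair weight in dimension $d_k$ is then at least $\gamma_{d_k}^2 \geq c^2/(\log d_k)^{2\delta}$, and \eqref{hps2} gives
$$
D_{N,\gamma}^*(x_1,\dots,x_N) \geq \frac{c^2}{12\,(\log d_k)^{2\delta}} \qquad \textrm{whenever } d_k \geq 2^{N+1}.
$$
Setting $\ve_k := c^2/(13\,(\log d_k)^{2\delta})$, this yields
$$
n_\gamma^*(d_k,\ve_k) \geq \log_2(d_k).
$$
If strong tractability held with $\ve$-exponent $\beta$, i.e.\ $n_\gamma^*(d,\ve) \leq C\ve^{-\beta}$ for some $C > 0$, then
$$
\log_2(d_k) \leq C\left(\frac{13}{c^2}\right)^\beta (\log d_k)^{2\delta\beta},
$$
and letting $k \to \infty$ forces $2\delta\beta \geq 1$, i.e.\ $\beta \geq 1/(2\delta)$.

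For Part 1, the hypothesis permits $\delta > 0$ to be taken arbitrarily small (with a corresponding infinite subsequence each time), so no finite $\beta$ works, ruling out strong tractability. For Part 2, we apply the argument with $\delta = 1/4 - \ve$ (for the specific $c,\ve$ provided by the hypothesis), giving $\beta \geq 2/(1-4\ve) > 2$, as claimed. The proof is essentially mechanical once the pair-weight reduction is in place; the only mild point to check is that \eqref{hps2} really depends only on a uniform lower bound for the pair weights (and not on their individual values), so it is legitimate to invoke it with the $k$-dependent threshold $\gamma_{d_k}^2$ rather than an absolute constant.
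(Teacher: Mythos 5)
Your proof is correct and follows essentially the same route as the paper's: both reduce to the Hinrichs--Pillichshammer--Schmid lower bound \eqref{hps2}, use monotonicity to pass from a lower bound on individual $\gamma_j$ to a uniform lower bound on pair weights $\gamma_\fu$ with $\afu=2$, and then exploit the threshold $d\geq 2^{N+1}$ to force $N\gtrsim\log d$ while the assumed strong tractability bound permits only a power of $\log\log d$, giving the contradiction. The only cosmetic difference is that you phrase the contradiction in terms of $n_\gamma^*(d,\ve)\leq C\ve^{-\beta}$ directly, whereas the paper works with the equivalent discrepancy form $D_{N,\gamma}^*\leq C N^{-\beta}$ and picks the exponent $\beta/4$ in the hypothesis; your explicit derivation of $\beta\geq 1/(2\delta)$ is, if anything, a little cleaner and makes the Part~2 conclusion $\beta\geq 2/(1-4\ve)$ more transparent. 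Your worry about applying \eqref{hps2} with a $k$-dependent constant is unfounded: that bound holds for any fixed lower bound on the pair weights and any fixed dimension, which is exactly how the paper uses it as well.
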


There remains a gap between Theorems~\ref{th4} and~\ref{th2} and Theorem~\ref{th3}, so the exact optimal condition for strong tractability of the weighted star-discrepancy remains an open problem.

\section{Proofs}

For the proof of Theorem~\ref{th1} we will use the following result, which is~\cite[Theorem 1]{aistho}.

\begin{lemma} \label{lemma1}
For any $d \geq 1, ~N \geq 1$ and $q \in (0,1)$, a set of $N$ independent, identically distributed (i.i.d.) $[0,1]^d$-uniformly distributed random points $Y_1, \dots, Y_N$ satisfies
$$
D_N^*(Y_1, \dots, Y_N) \leq 5.7 \sqrt{4.9+\frac{\log((1-q)^{-1})}{d}} \frac{\sqrt{d}}{\sqrt{N}}
$$
with probability at least $q$.
\end{lemma}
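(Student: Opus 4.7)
The plan is to prove Lemma~\ref{lemma1} via the standard bracketing approach for empirical processes indexed by anchored boxes, following the strategy initiated by Heinrich and sharpened since. First I would fix a parameter $\delta>0$ and construct a $\delta$-bracketing cover of the class of anchored boxes $\{[0,z) : z\in[0,1]^d\}$: a finite family of pairs $([0,\underline{z}^{(i)}),[0,\overline{z}^{(i)}))$ such that every anchored box $[0,z)$ is sandwiched between some such pair with volume gap $\lambda([0,\overline{z}^{(i)}))-\lambda([0,\underline{z}^{(i)}))\leq \delta$. Using Gnewuch's volume-adapted grid (partition each axis at the points whose one-dimensional prefix volumes are multiples of $\delta^{1/d}$), one can keep the cover size bounded by $(c/\delta)^d$ for a small explicit constant $c$.

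Next, for each anchored box $B$ of volume $p=\lambda(B)$ appearing in the cover, I would apply \emph{Bernstein's} inequality (rather than Hoeffding) to $\Delta(B)=\frac{1}{N}\sum_{n=1}^N \mathds{1}_B(Y_n)-p$, using that the variance of $\mathds{1}_B(Y)$ equals $p(1-p)\leq p$. This yields a tail bound of the shape
$$
\p(|\Delta(B)|>t) \;\leq\; 2\exp\!\left(-\frac{Nt^2/2}{p+t/3}\right).
$$
A union bound over the at most $(c/\delta)^d$ elements of the cover contributes a term of size $d\log(c/\delta)$ inside the exponential. To pass from the discrete bound back to $\sup_z|\Delta(z)|$ I would use the usual sandwich argument: whenever $[0,\underline{z}^{(i)})\subseteq[0,z)\subseteq[0,\overline{z}^{(i)})$, the quantity $\Delta(z)$ differs from $\Delta(\underline{z}^{(i)})$ and $\Delta(\overline{z}^{(i)})$ by at most $\delta$ plus the empirical mass in the bracket difference, which is itself controlled by Bernstein applied to the small difference set (whose variance is at most $\delta$).

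Finally I would optimize $\delta$ as a function of $N$, $d$ and $q$, balancing the discretization error $\delta$ against the union-bound cost $d\log(c/\delta)$, and thread the resulting numerical constants to obtain the explicit form $5.7\sqrt{4.9+\log((1-q)^{-1})/d}\,\sqrt{d/N}$. The main obstacle is not the structure of the argument, which is by now classical, but the careful bookkeeping of constants. The improvement of the leading constant from the older value $10$ in~\eqref{hnww} down to $5.7$, together with the precise additive term $4.9$ inside the square root, depends sensitively on the variance-based use of Bernstein instead of Hoeffding, on choosing a volume-adapted rather than a uniform side-length grid, and on optimizing $\delta$ simultaneously in $d$ and in $\log((1-q)^{-1})$ so that the latter only contributes a term of order $d^{-1}$ under the square root. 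That delicate tuning, rather than any new probabilistic ingredient, is what distinguishes the explicit form stated in Lemma~\ref{lemma1} from its qualitative predecessors.
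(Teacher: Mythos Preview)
The paper does not prove Lemma~\ref{lemma1} at all: it is simply quoted verbatim as \cite[Theorem~1]{aistho}, so there is no in-paper proof to compare your sketch against. Your outline is a reasonable reconstruction of the general machinery behind such bounds, and your identification of Bernstein's inequality (rather than Hoeffding) as the source of the sharpened constants is correct.

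That said, the argument actually carried out in \cite{aistho} is not a single-scale $\delta$-bracketing cover as you describe. It is a multi-scale \emph{dyadic chaining} argument in the spirit of \cite{aist}: one builds a nested hierarchy of $\delta_k$-covers with $\delta_k=2^{-k}$, telescopes the discrepancy along this chain, and applies Bernstein at each level to the thin difference sets, whose volumes (hence variances) decay geometrically. The contributions of the levels then form a convergent series, and the constants $5.7$ and $4.9$ arise from summing that series after a careful choice of level-dependent deviation thresholds. A one-shot $\delta$-cover with Bernstein, as in your plan, does produce a bound of the same qualitative shape $c_1\sqrt{c_2+\log((1-q)^{-1})/d}\,\sqrt{d/N}$, but with noticeably larger numerical constants; the single optimization in $\delta$ you propose cannot by itself recover the values $5.7$ and $4.9$. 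So the structure of your proposal is sound but coarser than what is needed, and the final ``threading of constants'' step would not close to the stated numbers without replacing the single cover by the dyadic chain.
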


The following lemma, which we will use for the proof of Theorem~\ref{th4} and~\ref{th2}, is a special case of~\cite[Theorem 2]{hnww}, which is the central ingredient in the original proof of~\eqref{hnww}. It follows from deep results of Talagrand~\cite{tala} and Haussler~\cite{hau}.
\begin{lemma} \label{lemma2}
There exists an absolute constant $K$ such that the following holds: Let $Y_1, \dots, Y_N$ be i.i.d. $[0,1]^d$-uniformly distributed random points. Then for all $t > 0$ we have 
$$
\p \left( D_N^*(Y_1, \dots, Y_N) \geq \frac{t}{\sqrt{N}} \right) \leq \frac{1}{t} \left( \frac{K t^2}{d} \right)^d e^{-2t^2}.
$$
\end{lemma}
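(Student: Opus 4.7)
The plan is to realize the star-discrepancy as the supremum of the empirical process indexed by the class $\mathcal{C}_d = \{[0,z) : z \in [0,1]^d\}$ of anchored axis-parallel boxes, and then combine sharp metric-entropy estimates for $\mathcal{C}_d$ with a concentration inequality for the empirical process. The class $\mathcal{C}_d$ is a Vapnik--Chervonenkis class of VC dimension $d$ (each box is cut out by $d$ coordinate thresholds), so Haussler's theorem provides an $\varepsilon$-covering in the $L^2(\lambda)$ metric of cardinality at most $K_0 (K_1/\varepsilon)^{2d}$ with absolute constants $K_0,K_1$.

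The first step is to observe that for a single fixed box $C = [0,z)$, the deviation $\Delta(z)$ is a normalised sum of i.i.d.\ $[-1,1]$-bounded random variables, so Hoeffding's inequality delivers the Gaussian tail $\p(|\Delta(z)| \geq t/\sqrt{N}) \leq 2 e^{-2t^2}$. A discretisation via an $\varepsilon$-net followed by a union bound, with the discretisation gap closed by the monotonicity of $\mathds{1}_{[0,z)}$ in $z$ (any anchored box can be sandwiched between two net elements whose $L^1$ difference is at most $\varepsilon$), already produces a crude estimate of the shape $(K/\varepsilon)^{2d} e^{-c t^2}$.

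The refined bound of the lemma must come from a genuine chaining argument rather than a single union bound: one partitions $\mathcal{C}_d$ into a dyadic chain of nets of geometrically decreasing scales and controls the increments at each level using Talagrand's concentration inequality for suprema of bounded empirical processes. The $d^d$ in the denominator of $(Kt^2/d)^d$ then emerges from the combinatorial sharpening in Haussler's theorem over the elementary Sauer--Shelah bound, while the $t^2$ in the numerator records the localisation of the high-variance boxes. The prefactor $1/t$ arises from a geometric sum of Gaussian tails, via $\int_t^\infty e^{-2 s^2}\,ds \leq (4t)^{-1} e^{-2t^2}$, when one sums the bounds across the dyadic scales.

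The main obstacle is producing simultaneously the precise Hoeffding constant $2$ inside the exponential \emph{and} the correct $d^d$ denominator in the combinatorial prefactor. A direct application of Talagrand's inequality typically yields only $e^{-c t^2}$ with some unspecified $c < 2$, so the chaining has to be organised so that the Hoeffding exponent from the finest scale passes through the chain essentially unchanged, and the entropy contributions must be summed carefully so that only the mild $1/t$ polynomial factor survives rather than a polynomial in $t$ or $d$. This delicate balance is precisely why the lemma is attributed in the paper to the deep work of Talagrand and Haussler rather than derived by elementary means.
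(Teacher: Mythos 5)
The paper does not actually prove Lemma~2: it is quoted directly (up to dropping a side condition) from \cite[Theorem~2]{hnww}, whose proof rests on Talagrand's sharp empirical process bound \cite[Theorem~1.1]{tala} and Haussler's packing number theorem \cite{hau}. The only remark the present paper adds is that the hypothesis $t \geq K\sqrt{d}$ appearing in \cite[Theorem~2]{hnww} is unnecessary because Talagrand's Theorem~1.1 does not require it. Your sketch correctly identifies the scaffolding behind the cited result --- the anchored boxes form a VC class of dimension $d$, Haussler yields universal $L^2$ covering numbers of order $(K/\varepsilon)^{2d}$, a pointwise Hoeffding bound gives the sharp exponent $-2t^2$, and some form of entropy chaining is needed to globalise this --- so your route and the paper's citation point at the same underlying mathematics.

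As a proof of the lemma, however, your proposal is incomplete and, in effect, circular. Your final paragraph concedes that the crux --- preserving the exponent $-2t^2$ across the chain while absorbing the entropy cost into the merely polynomial prefactor $t^{-1}(Kt^2/d)^d$ --- is ``precisely why the lemma is attributed to the deep work of Talagrand and Haussler.'' But that crux \emph{is} \cite[Theorem~1.1]{tala}; it is the theorem one should simply invoke, as \cite{hnww} and this paper do, rather than a step one can reproduce by an elementary dyadic chain plus union bound (which, as you note yourself, typically either degrades the constant in the exponent or produces a prefactor exponential in $d$). Nothing in your argument indicates how to sidestep this, so the ``organise the chaining so the Hoeffding exponent survives'' step is a genuine gap if the goal is a self-contained proof. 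One further small imprecision: you refer to ``Talagrand's concentration inequality for suprema of bounded empirical processes,'' which most readers will identify with the 1996 functional Bernstein-type inequality; that result bounds deviations around the \emph{mean} of the supremum and does not on its own deliver the exponent $2$. The tool actually cited here is the earlier 1994 entropy-based tail bound.
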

(Note: in the formulation of~\cite[Theorem 2]{hnww} the additional assumption $t \geq K \sqrt{d}$ can be found; however, as~\cite[Theorem 1.1]{tala} shows, this additional assumption is not necessary.)\\

As already pointed out in~\cite{hnww}, Talagrand's result is actually much more general than Lemma~\ref{lemma2} above; as a consequence, it could be used to prove results similar to Theorem~\ref{th4} and~\ref{th2} in the present paper for more general weighted discrepancies, and not only for the weighted star-discrepancy.

\begin{proof}[Proof of Theorem~\ref{th1}]
Let $\mathcal{P} = \{z_1, \dots, z_N\}$ be a set of $N$ i.i.d. $[0,1]^d$-uniformly distributed random points, and let $\mathfrak{u}$ be a non-empty subset of $\{1, \dots, d\}$. Set $\mathcal{P}_\fu = \{z_1(\fu), \dots, z_N(\fu)\}$ and
$$
A_\mathfrak{u} = \left\{\mathcal{P}:~ D_N^* (\mathcal{P}_\mathfrak{u}) > 5.7 \sqrt{4.9 + 2 \log \left(\frac{ed}{|\mathfrak{u}|}\right)} \frac{\sqrt{|\mathfrak{u}|}}{\sqrt{N}} \right\}.
$$
Note that $z_1(\fu), \dots, z_N(\fu)$ are i.i.d. $[0,1]^\afu$-uniformly distributed points. Thus by Lemma~\ref{lemma1}, for $q = 1 - e^{-2 \afu \log(ed/\afu)}$ we have
\begin{eqnarray*}
\p \left(A_\fu \right) \leq e^{-2\afu \log(ed/\afu)}.
\end{eqnarray*}
Consequently, 
\begin{eqnarray}
\p \left( \bigcup_{\fu \subset \{1, \dots, d\}} A_\fu \right) \leq \sum_{r=1}^d ~\sum_{\substack{\fu \subset \{1, \dots, d\},\\\afu = r}} e^{-2\afu \log(ed/\afu)}. \label{fro}
\end{eqnarray}
For any $r \geq 1$, the number of subsets $\fu$ of $\{1, \dots, d\}$ which have cardinality $r$ is given by
$$
\binom{d}{r} \leq \left( \frac{e d}{r}\right)^r = e^{r \log(ed/r)}.
$$
Thus, continuing from~\eqref{fro} and noting that $\log(ed/r) \geq 1$, we have
$$
\p \left( \bigcup_{\fu \subset \{1, \dots, d\}} A_\fu \right) \leq \sum_{r=1}^d e^{r \log(ed/r)} e^{-2r \log(ed/r)} \leq \sum_{r=1}^\infty e^{-r} = (e-1)^{-1} < 1.
$$
Thus there exist points $x_1, \dots, x_N$ which avoid all sets $A_\fu$ for $\fu \subset \{1, \dots, d\}$. By the definition of the weighted star-discrepancy for these points $x_1, \dots, x_N$ we have
$$
D_{N,\gamma}^*(x_1, \dots, x_N) \leq \frac{5.7}{\sqrt{N}} ~\max_{\fu \subset \{1, \dots,d\}} ~\gamma_{\fu} \sqrt{4.9 + 2 \log \left(\frac{ed}{|\mathfrak{u}|}\right)} \sqrt{|\mathfrak{u}|},
$$
which proves Theorem~\ref{th1}.
\end{proof}

\begin{proof}[Proof of Theorem~\ref{th4}]
Let weights $\gamma=(\gamma_\fu)_{\fu \subset \N}$ satisfying the assumptions of Theorem~\ref{th4} be given, and let $\mathcal{P} = \{z_1, \dots, z_N\}$ be a set of $N$ i.i.d. $[0,1]^{d}$-uniformly distributed random points. Without loss of generality we can assume that 
\begin{equation} \label{gammae}
\gamma_\fu \leq 1, \qquad \fu \subset \N.
\end{equation}
In fact, for weights $\gamma$ satisfying~\eqref{c1th4} it is clear that~\eqref{gammae} can always be achieved by changing at most \emph{finitely} many elements of the original set $(\gamma_\fu)_{\fu \subset \N}$, and since the constant $C_\gamma$ in the conclusion of the theorem may depend on the weights $\gamma$ we may just as well assume that~\eqref{gammae} already holds for the original weights. Consequently by~\eqref{c1th4} there also exists a constant $\hat{c}$ such that
\begin{equation} \label{c2addi}
\sum_{\fu \subset \N} e^{- \hat{c} \gamma_\fu^{-2}}\leq \frac{1}{2}.
\end{equation}
We can assume that $\hat{c}$ is so large that
\begin{equation} \label{gammae2}
\hat{c} \geq 1 \qquad \textrm{and} \qquad \hat{c} \geq c^2 K,
\end{equation}
where $c$ is the constant in the statement of Theorem~\ref{th4} and where $K$ is the absolute constant in Lemma~\ref{lemma2}.\\

For a non-empty subset $\fu$ of $\{1, \dots, d\}$ for which $\gamma_\fu \neq 0$ we set $\mathcal{P}_\fu = \{z_1(\fu), \dots, z_N(\fu)\}$ and
\begin{equation} \label{afudef}
A_\fu = \left\{\mathcal{P}:~ D_N^* (\mathcal{P}_\fu) > \hat{c}^{1/2} \gamma_\fu^{-1} N^{-1/2} \right\}.
\end{equation}
If $\gamma_\fu = 0$ we simply set $A_\fu = \emptyset$. We will use Lemma~\ref{lemma2} for $t := \hat{c}^{1/2} \gamma_\fu^{-1}$. Note that~\eqref{gammae} and the first part of~\eqref{gammae2} imply that $t \geq 1$, which means that we can omit the factor $1/t$ on the right-hand side of the inequality in Lemma~\ref{lemma2}. By Lemma~\ref{lemma2} we have
\begin{eqnarray} \label{afu3}
\p (A_\fu) & \leq & \left(\frac{\hat{c} K}{\afu \gamma_\fu^{2}} \right)^\afu e^{-2 \hat{c} \gamma_\fu^{-2}}.
\end{eqnarray}
Remember that by assumption we have $\gamma_\fu \leq c \afu^{-1/2}$. Combining this with the second part of~\eqref{gammae2} we get
\begin{equation} \label{obenda}
\frac{\hat{c}}{\afu \gamma_\fu^{2}} \geq \frac{\hat{c}}{c^2} \geq K.
\end{equation}
Using the fact that $\log (K x) \leq x$ for $x \geq K$, as a consequence of~\eqref{obenda} we have
$$
\afu \log \left( \frac{\hat{c} K}{\afu \gamma_\fu^{2}} \right) \leq \hat{c} \gamma_\fu^{-2},
$$
which implies
$$
\left(\frac{\hat{c} K}{\afu \gamma_\fu^{2}} \right)^\afu \leq e^{\hat{c} \gamma_\fu^{-2}},
$$
and consequently
$$
\p(A_\fu) \leq e^{- \hat{c} \gamma_\fu^{-2}}.
$$
Together with~\eqref{c2addi} this implies that
$$
\sum_{\fu \subset \{1, \dots, d\}} \p(A_\fu) \leq \frac{1}{2}.
$$
Consequently there exists a realization $x_1, \dots, x_N$ of the random points $z_1, \dots, z_N$ which avoids all sets $A_\fu$, and for which consequently 
$$
D_{N,\gamma}^*(x_1, \dots, x_N) \leq \hat{c}^{1/2} N^{-1/2}.
$$
This proves Theorem~\ref{th4}.
\end{proof}

\begin{proof}[Proof of Theorem~\ref{th2}]
We will show that Theorem~\ref{th2} can be reduced to Theorem~\ref{th4}. We assume in the sequel that $(\gamma_j)_{j \geq 1}$ satisfies~\eqref{c1}. Similar as in the proof of Theorem~\ref{th4}, we can assume without loss of generality that the weights satisfy
\begin{equation}\label{c2alt}
\gamma_j \leq \frac{1}{2}, \quad j \geq 1, \qquad \textrm{and} \qquad \sum_{j=1}^\infty e^{-c \gamma_j^{-2} /2 } \leq \frac{1}{2}. 
\end{equation}
Then we clearly have $\gamma_\fu \leq 2^{-\afu} \leq \afu^{-1/2}$ for all $\fu \subset \N$. It remains to show that we also have
\begin{equation} \label{alsos}
\sum_{\fu \subset \N} e^{-\hat{c} \gamma_\fu^{-2}} < \infty
\end{equation}
for some appropriate constant $\hat{c}>0$. For this we will use the following inequality, which can be seen as a generalized form of the Bernoulli inequality and can be easily proved using an induction argument. Let $a_1, \dots, a_n$ be non-negative real numbers. Then 
$$
\prod_{j=1}^n (1 + a_j) \geq 1 + \sum_{j=1}^n a_j.
$$
Using this inequality together with~\eqref{c2alt} we have
\begin{eqnarray*}
\gamma_\fu^{-2} & = & \prod_{j \in \fu} \gamma_j^{-2} \\
& \geq & 1 + \sum_{j \in \fu} (\gamma_j^{-2} - 1) \\
& = & 1 - \afu + \sum_{j \in \fu} \gamma_j^{-2} \\
& \geq & \frac{\sum_{j \in \fu} \gamma_j^{-2}}{2},
\end{eqnarray*}
and consequently, for $\hat{c} = c/2$, 
$$
e^{-c \gamma_\fu^{-2}} \leq \prod_{j \in \fu} e^{- \hat{c} \gamma_j^{-2}}.
$$
Now note that for any fixed $r \geq 1$ we have
\begin{eqnarray*}
\sum_{\substack{\fu \subset \N,\\\afu = r}} ~\prod_{j \in \fu} e^{- \hat{c} \gamma_j^{-2}} & \leq & \left( \sum_{j=1}^\infty e^{- \hat{c} \gamma_j^{-2}} \right)^r \\
& \leq & \frac{1}{2^r},
\end{eqnarray*}
where the last inequality is a consequence of the second part of~\eqref{c2alt}. Thus 
\begin{eqnarray*}
\sum_{\fu \subset \N} e^{- \hat{c} \gamma_\fu^{-2}} & \leq & \sum_{r=1}^\infty ~\sum_{\substack{\fu \subset \N,\\\afu = r}} ~\prod_{j \in \fu} e^{- \hat{c} \gamma_j^{-2}} \\
& \leq & \sum_{r=1}^\infty \frac{1}{2^r} \\
& = & 1.
\end{eqnarray*}
Consequently~\eqref{alsos} is satisfied, and Theorem~\ref{th2} follows from Theorem~\ref{th4}.
\end{proof}

\begin{proof}[Proof of Theorem~\ref{th3}]
We will deduce Theorem~\ref{th3} from~\eqref{hps2}. Let product weights $\gamma$ satisfying assumption~\eqref{th3c} be given. Assume that the weighted star-discrepancy is strongly tractable, that is that there exist constants $C$ and $\beta > 0$ such that for any $d$ and $N$ there is a set $\mathcal{P}_{N,d}$ of $N$ points in $[0,1]^d$ such that
\begin{equation} \label{3a}
D_{N,\gamma}^* (\mathcal{P}_{N,d}) \leq C N^{-\beta}.
\end{equation}
Note that under assumption~\eqref{th3c} there exist infinitely many $j$ for which 
$$
\gamma_j \geq \frac{1}{(\log j)^{\beta/4}}.
$$
Since by assumption the sequence $(\gamma_j)_{j \geq 1}$ is non-increasing, this means that there are infinitely many $d$ such that
$$
\gamma_j \geq \frac{1}{(\log d)^{\beta/4}} \qquad \textrm{for} \qquad 1 \leq j \leq d,
$$
and consequently
$$
\gamma_\fu \geq \frac{1}{(\log d)^{\beta/2}}
$$
for any $\fu \subset \{1, \dots, d\}$ which satisfies $\afu = 2$. Thus by~\eqref{hps2} for such $d$ we have
\begin{equation} \label{3b}
D_{N,\gamma}^* (\mathcal{P}_{N,d}) \geq \frac{1}{12 (\log d)^{\beta/2}},
\end{equation}
provided $d \geq 2^{N+1}$. In particular, choosing $N$ such that $d = 2^{N+1}$ implies that for such $d$ we have
\begin{equation} \label{d}
\frac{1}{12 (\log d)^{\beta/2}} \leq  \frac{C}{(\log_2 (d/2))^{\beta}}.
\end{equation}
However, it is clear that~\eqref{d} cannot hold for infinitely many values of $d$. Consequently our assertion that the weighted star-discrepancy for the weights $\gamma$ is strongly tractable must be false, which proves the first part of Theorem~\ref{th3}. The second part can be shown in the same way.
\end{proof}

\begin{proof}[Proof that Theorem~\ref{th4} contains the second part of Corollary~\ref{co1} as a special case]
Assume that the constant in~\eqref{co1equ} exists. Then for any $\fu \subset \N$, writing $\max(\fu)$ for the largest element of $\fu$, we have 
$$
\gamma_\fu \leq \frac{\hat{C}_\gamma}{\sqrt{4.9+2 \log \left(\frac{e \max(\fu)}{\afu} \right)} \sqrt{\afu}}.
$$
In particular the first assumption of Theorem~\ref{th4} is satisfied. Now let $c$ be so large that $c \hat{C}_\gamma^{-2} \geq 2$. Then
\begin{eqnarray*}
\sum_{\fu \subset \N} e^{-c \gamma_\fu^{-2}} & \leq & \sum_{r=1}^\infty~ \sum_{k=r}^\infty ~\sum_{\substack{\afu = r, \\\max(\fu) = k}} e^{- 2 \left(4.9+2 \log \left(\frac{e k}{r} \right) \right) r}.
\end{eqnarray*}
Now for any fixed $k$, the number of subsets $\fu$ of $\N$ satisfying $\max(\fu)=k$ and $\afu = r$ equals $\binom{k-1}{r-1}$. Consequently by 
$$
\binom{k-1}{r-1} \leq \binom{k}{r} \leq \left( \frac{e k}{r} \right)^r
$$ 
we obtain
\begin{eqnarray}
\sum_{\fu \subset \N} e^{-c \gamma_\fu^{-2}} & \leq &  \sum_{r=1}^\infty ~ \sum_{k=r}^\infty \left( \frac{ek}{r} \right)^r e^{- 2 \left(4.9 + 2 \log \left(\frac{e k}{r} \right)\right) r} \nonumber\\
& \leq &  \sum_{r=1}^\infty e^{-9.8 r} \sum_{k=r}^\infty \left( \frac{ek}{r} \right)^{-3r}. \label{sodun}
\end{eqnarray}
To see that this sum is finite, we can use the fact that the function $f_r(x) := \left( \frac{ex}{r} \right)^{-3r}$ is monotonic decreasing in $x$, for $x \geq r$ and any fixed $r \geq	1$. Thus
$$
\sum_{k=r}^\infty \left( \frac{ek}{r} \right)^{-3r} \leq f_r(r) + \sum_{k=r+1}^\infty \int_{k-1}^k f_r(x)~dx = f_r(r) + \int_{r}^\infty f_r(x) ~dx = e^{-3r} + \frac{r e^{-3r}}{3r-1}.
$$
Consequently from~\eqref{sodun} we get
$$
\sum_{\fu \subset \N} e^{-c \gamma_\fu^{-2}} < \infty,
$$
which means that assumption~\eqref{c1th4} of Theorem~\ref{th4} is also satisfied.
\end{proof}

\section*{Acknowledgments} I want to thank Josef Dick for drawing my attention to the problems discussed in this paper, and for many helpful discussions and comments. Thanks also to Mario Ullrich for several helpful comments.


\end{document}